\numberwithin{equation}{section}
\theoremstyle{plain}	     
\newtheorem{thm}{Theorem}[section] 
\newtheorem{lem}[thm]{Lemma}
\newtheorem{prop}[thm]{Proposition}
\theoremstyle{definition}
\theoremstyle{remark} 
\newtheorem{rem}[thm]{Remark}
\begin{document}
\title{The complete $p$-elliptic integrals and \\
a computation formula of $\pi_p$ for $p=4$
\footnote{This work was supported by MEXT/JSPS KAKENHI Grant (No. 24540218).}}
\author{Shingo Takeuchi \\
Department of Mathematical Sciences\\
Shibaura Institute of Technology
\thanks{307 Fukasaku, Minuma-ku,
Saitama-shi, Saitama 337-8570, Japan. \endgraf
{\it E-mail address\/}: shingo@shibaura-it.ac.jp \endgraf
{\it 2010 Mathematics Subject Classification.} 
34L10, 33E05, 33C75}}
\date{}

\maketitle

\begin{abstract}
The complete $p$-elliptic integrals are generalizations 
of the complete elliptic integrals by 
the generalized trigonometric function $\sin_p{\theta}$
and its half-period $\pi_p$.
It is shown, only for $p=4$, that the generalized $p$-elliptic integrals yield
a computation formula of $\pi_p$
in terms of the arithmetic-geometric mean.
This is a $\pi_p$-version of 
the celebrated formula of $\pi$, independently proved
by Salamin and Brent in 1976.
\end{abstract}

\textbf{Keywords:}
Generalized trigonometric functions;
Complete elliptic integrals; 
Arithmetic-geometric mean;
Salamin-Brent's algorithm;
$p$-Laplacian


\section{Introduction}

A generalization of the trigonometric sine function, 
denoted by $\sin_p{x}$, is well-known.
The function $\sin_p{x}$, $1<p<\infty$, is defined as the inverse function of 
$$\arcsin_p{x}:=\int_0^x \frac{dt}{(1-t^p)^{\frac1p}}, \quad x \in [0,1],$$
so that it is increasing in $[0,\pi_p/2]$ onto $[0,1]$, where 
$$\pi_p:=2\arcsin_p{1}=2\int_0^1 \frac{dt}{(1-t^p)^{\frac1p}}
=\frac{2\pi}{p \sin {\frac{\pi}{p}}}.$$
Clearly, $\sin_2{x}=\sin{x}$ and $\pi_2=\pi$.
We can find a number of studies
of $\sin_p{x}$ and $\pi_p$ in \cite{DoR,LE,Li}
and the references given there. 
For important applications to half-linear differential equations,
or one-dimensional $p$-Laplace equations, 
we refer the reader to e.g. \cite{DEM,E,N}.

It is natural to try to relate the generalized trigonometric function
to the complete elliptic integrals. Following \cite{T3}, we define 
the \textit{complete $p$-elliptic integrals of the first kind}
\begin{equation}
\label{eq:K_p(k)}
K_p(k):=\int_0^{\frac{\pi_p}{2}}\frac{d\theta}{(1-k^p\sin_p^p{\theta})^{1-\frac1p}}
\end{equation}
and \textit{of the second kind}
\begin{equation}
\label{eq:E_p(k)}
E_p(k):=\int_0^{\frac{\pi_p}{2}}(1-k^p\sin_p^p{\theta})^{\frac1p}\,d\theta.
\end{equation}

Moreover, we prepare auxiliary integrals
\begin{align*}
I_p(a,b)
&:=\int_0^{\frac{\pi_p}{2}} \frac{d\theta}{(a^p\cos_p^p{\theta}
+b^p\sin_p^p{\theta})^{1-\frac1p}},\\
J_p(a,b)
&:=\int_0^{\frac{\pi_p}{2}} (a^p\cos_p^p{\theta}
+b^p\sin_p^p{\theta})^{\frac1p}\,d\theta,
\end{align*}
where $\cos_p{x}:=(1-\sin_p^p{x})^{\frac1p}$ for $x \in [0,\pi_p/2]$.
Using $I_p$ and $J_p$, we can write $K_p(k)=I_p(1,k')$ and $E_p(k)=J_p(1,k')$,
where $k':=(1-k^p)^{\frac1p}$.
The complete $p$-elliptic integrals have 
similar properties to the complete elliptic integrals.
We have complied some of them in the next section. 

In the case $p=2$, all the objects above coincide with 
the classical ones. As far as the complete elliptic integrals
are concerned, the following fact is well-known 
(see \cite{AAR,BB3} for more details):
Let $a \geq b >0$, and assume that $\{a_n\}$ and $\{b_n\}$ are
sequences satisfying $a_0=a,\ b_0=b$ and
\begin{equation*}
a_{n+1}=\frac{a_n+b_n}{2}, \quad
b_{n+1}=\sqrt{a_nb_n},\quad
n=0,1,2,\ldots.
\end{equation*}
Both the sequences converge to the same limit
as $n \to \infty$, denoted by $M_2(a,b)$, 
the \textit{Arithmetic-Geometric Mean of $a$ and $b$}.
It is surprising that 
$$I_2(a_n,b_n)=I_2(a,b)\quad \mbox{for all $n=0,1,2,\ldots$},$$ 
so that we can obtain the celebrated \textit{Gauss formula}
\begin{equation}
\label{eq:K_2}
K_2(k)=\frac{\pi}{2}\frac{1}{M_2(1,\sqrt{1-k^2})}.
\end{equation}
Combining \eqref{eq:K_2} with $k=1/\sqrt{2}$
and the Legendre relation (i.e. \eqref{eq:p-legendre} with $p=2$),
Salamin \cite{Sa} and Brent \cite{Br} independently proved 
the following famous formula of $\pi$:
\begin{equation}
\label{eq:formula_pi}
\pi=\frac{\displaystyle 4M_2\left(1,\frac{1}{\sqrt{2}}\right)^2}
{\displaystyle 1-\sum_{n=1}^\infty 2^{n+1}(a_n^2-b_n^2)}.
\end{equation}
We emphasize that \eqref{eq:formula_pi} is known as a fundamental 
formula to Salamin-Brent's algorithm, or Gauss-Legendre's algorithm,
for computing the value of $\pi$.

We are interested in finding a formula as \eqref{eq:formula_pi}
of $\pi_p$ for $p \neq 2$. 
Recently, the author \cite{T3} dealt with the case $p=3$.  
We take sequences
\begin{equation}
\label{eq:sequence_p=3}
a_{n+1}=\frac{a_n+2b_n}{3}, \quad
b_{n+1}=\sqrt[3]{\frac{(a_n^2+a_nb_n+b_n^2)b_n}{3}},\quad
n=0,1,2,\ldots.
\end{equation}
As in the case $p=2$, both the sequences converge to the same limit
$M_3(a,b)$ as $n \to \infty$. 
The important point is that 
$$a_nI_3(a_n,b_n)=aI_3(a,b)\quad \mbox{for all $n=0,1,2,\ldots$},$$
and hence
\begin{equation}
\label{eq:K_3}
K_3(k)=\frac{\pi_3}{2}\frac{1}{M_3(1,\sqrt[3]{1-k^3})}.
\end{equation}
Then, by \eqref{eq:K_3} with $k=1/\sqrt[3]{2}$ 
and \eqref{eq:p-legendre} of Lemma \ref{lem:p-legendre}
with $p=3$, we obtain
$$\pi_3=\frac{\displaystyle 2M_3\left(1,\frac{1}{\sqrt[3]{2}}\right)^2}
{\displaystyle 1-2\sum_{n=1}^\infty 3^{n}(a_n+c_n)c_n},\quad
c_n:=\sqrt[3]{a_n^3-b_n^3}.$$

In the present paper, we will give the following result of $\pi_p$ for $p=4$.

\begin{thm}
\label{thm:main}
Let $a \geq b >0$, and assume that $\{a_n\}$ and $\{b_n\}$ are
sequences satisfying $a_0=a,\ b_0=b$ and
\begin{equation}
\label{eq:sequence}
a_{n+1}=\sqrt{\frac{a_n^2+3b_n^2}{4}}, \quad
b_{n+1}=\sqrt[4]{\frac{(a_n^2+b_n^2)b_n^2}{2}},\quad
n=0,1,2,\ldots.
\end{equation}
Then, both the sequences converge to the same limit
$M_4(a,b)$ as $n \to \infty$, and
$\pi_4$ can be represented as
$$\pi_4=\frac{\displaystyle 2M_4\left(1,\frac{1}{\sqrt[4]{2}}\right)^2}
{\displaystyle 1-\sum_{n=1}^\infty 2^{n+1}\sqrt{a_n^4-b_n^4}},$$
where $a_0=a=1$ and $b_0=b=1/\sqrt[4]{2}$.
\end{thm}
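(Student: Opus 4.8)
The plan is to follow the same three-part strategy that worked for $p=2$ and $p=3$: (i) establish an invariance identity for the auxiliary integral $I_4$ under the iteration \eqref{eq:sequence}, (ii) deduce a Gauss-type formula $K_4(k)=(\pi_4/2)/M_4(1,\sqrt[4]{1-k^4})$, and (iii) combine this with the $p$-Legendre relation (Lemma \ref{lem:p-legendre} with $p=4$) evaluated at the self-dual modulus $k=k'=1/\sqrt[4]{2}$ to solve for $\pi_4$. First I would prove that the sequences $\{a_n\}$ and $\{b_n\}$ in \eqref{eq:sequence} converge to a common limit: from $a\ge b>0$ one checks $a_{n+1}\ge b_{n+1}$ (this reduces to an elementary inequality between the quantities $(a^2+3b^2)/4$ and $\sqrt{(a^2+b^2)/2}\,\cdot b$ after squaring), that $\{a_n\}$ is nonincreasing and $\{b_n\}$ nondecreasing, and that $a_{n+1}-b_{n+1}$ is controlled by a multiple of $a_n-b_n$ (in fact quadratically), giving a common limit $M_4(a,b)$.

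The heart of the matter is the invariance $a_n I_4(a_n,b_n)=a\,I_4(a,b)$ for all $n$, equivalently $a\,I_4(a,b)=a_1 I_4(a_1,b_1)$ for a single step with $(a_1,b_1)$ given by \eqref{eq:sequence}. I would prove this by a substitution in the defining integral $I_4(a,b)=\int_0^{\pi_4/2} (a^4\cos_4^4\theta+b^4\sin_4^4\theta)^{-3/4}\,d\theta$ analogous to the classical Landen/Gauss substitution. The natural candidate is to introduce a new angle $\varphi$ via a relation such as $\sin_4\varphi$ expressed through $\sin_4\theta$, $\cos_4\theta$ (mimicking $\tan\varphi=\ldots$ in the $p=2$ case, or the cubic substitution used in \cite{T3} for $p=3$); one then must verify that as $\theta$ runs over $[0,\pi_4/2]$ so does $\varphi$, compute $d\theta$ in terms of $d\varphi$ using the differential equation $(\sin_p)'=\cos_p$, $(\cos_p)'=-\cos_p^{2-p}\sin_p^{p-1}$, and check that the integrand transforms so that the factor $a/a_1$ pops out with $a_1^4\cos_4^4\varphi+b_1^4\sin_4^4\varphi$ appearing to the power $-3/4$. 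Getting $K_4$ is then immediate: set $a=1$, $b=k'=(1-k^4)^{1/4}$, note $I_4(1,k')=K_4(k)$, let $n\to\infty$ using $I_4(A,A)=A^{-3}\int_0^{\pi_4/2}d\theta=(\pi_4/2)A^{-3}$... more precisely $a_n I_4(a_n,b_n)\to M_4(1,k')\cdot I_4(M_4,M_4)=(\pi_4/2)/M_4(1,k')^2\cdot M_4$, which after cleanup yields \eqref{eq:K_3}'s analogue $K_4(k)=(\pi_4/2)/M_4(1,\sqrt[4]{1-k^4})$.

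For the final formula I would specialize to $k=1/\sqrt[4]{2}$, so $k'=k$ and the $p$-Legendre relation becomes an identity relating $K_4(k)$ and $E_4(k)$ at this single point; solving for $\pi_4$ produces a numerator proportional to $M_4(1,1/\sqrt[4]{2})^2$ and a denominator of the form $1-(\text{series})$. The series arises by writing $E_4(k)$ via the telescoping consequence of the iteration: one expresses $E_4(k)=J_4(1,k')$ in terms of $J_4(a_n,b_n)$ and shows $a_n^{-1}J_4(a_n,b_n)-a_n I_4(a_n,b_n)$ telescopes, with the increment at step $n$ being a constant times $2^{n}\sqrt{a_n^4-b_n^4}$ (the $p=4$ analogue of the $2^{n+1}(a_n^2-b_n^2)$ terms in \eqref{eq:formula_pi} and the $3^n(a_n+c_n)c_n$ terms in the $p=3$ case); summing and substituting into the Legendre relation gives the stated expression with $\sum_{n\ge1}2^{n+1}\sqrt{a_n^4-b_n^4}$ in the denominator. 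I expect the main obstacle to be step (i)–(ii), namely discovering the exact change of variables that makes $a\,I_4(a,b)$ invariant and verifying the Jacobian computation; unlike $p=2$ where the Landen transformation is classical, here one must both guess the correct algebraic relation between $\theta$ and $\varphi$ matching the prescribed sequences \eqref{eq:sequence} and push through the nonlinear differentiation identities for $\sin_4,\cos_4$, and the bookkeeping for the second-kind integral $J_4$ (to extract the telescoping series) is likely to be the most calculation-heavy part.
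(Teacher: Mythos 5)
Your overall architecture (invariance of a weighted $I_4$ under the iteration $\Rightarrow$ Gauss-type formula $\Rightarrow$ Legendre relation at the self-dual modulus $k=1/\sqrt[4]{2}$ $\Rightarrow$ solve for $\pi_4$) is exactly the paper's, and your treatment of convergence and of the telescoping series for $J_4$ is essentially right. But there are two problems. First, the invariant you state is wrong: you claim $a_nI_4(a_n,b_n)=aI_4(a,b)$, whereas the correct statement (and the one the paper proves in Lemma \ref{lem:I}) is $a_n^2I_4(a_n,b_n)=a^2I_4(a,b)$. Since $I_4(A,A)=A^{-3}\pi_4/2$, your version would give $\lim_n a_nI_4(a_n,b_n)=(\pi_4/2)M_4^{-2}$ and hence $K_4(k)=(\pi_4/2)/M_4(1,k')^2$, which contradicts the formula \eqref{eq:K_4} you are aiming for; the weight must be $a_n^{p-2}$ (so $a_n^0$ for $p=2$, $a_n^1$ for $p=3$, $a_n^2$ for $p=4$), and your ``after cleanup'' does not repair this.

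Second, and more seriously, the heart of the proof --- establishing the invariance --- is not actually carried out. You propose to find a Landen-type change of variables $\theta\mapsto\varphi$ directly in the integral $I_4(a,b)$, but you never exhibit the substitution, and you yourself flag that guessing it is ``the main obstacle.'' There is no reason to expect such a closed-form substitution for $\sin_4,\cos_4$ to exist, and the paper does something genuinely different: it first proves the quadratic transformation of Proposition \ref{prop:KE}\,(ii) (equivalent to Ramanujan's identity for $F(\tfrac14,\tfrac34;1;\cdot)$) by showing that both sides satisfy the same second-order ODE from Lemma \ref{lem:p-hypergeometric}, which has a regular singular point at $k=1$ with both indicial roots zero, so that agreement at $k=1$ forces equality; Lemma \ref{lem:I} is then read off by writing $a^2I(a,b)=a^{-1}K_4'(b/a)$ and applying the transformation, and the companion identity (iv) for $E_4$ is obtained by differentiating (ii). Without either exhibiting the substitution or supplying an alternative argument such as this ODE one, your proposal leaves the decisive step unproved.
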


To show Theorem \ref{thm:main}, 
it is crucial to prove
$$a_n^2I_4(a_n,b_n)=a^2I_4(a,b) \quad \mbox{for all $n=0,1,2,\ldots$,}$$
which yields
\begin{equation}
\label{eq:K_4}
K_4(k)=\frac{\pi_4}{2}\frac{1}{M_4(1,\sqrt[4]{1-k^4})}
\end{equation} 
(cf. \eqref{eq:K_2} and \eqref{eq:K_3}). 

\begin{rem}
(i) The complete $p$-elliptic integrals can be written as
$$K_p(K)=\frac{\pi_p}{2}F\left(\frac1p,1-\frac1p;1;k^p\right),\quad
E_p(K)=\frac{\pi_p}{2}F\left(\frac1p,-\frac1p;1;k^p\right),$$
where $F(a,b;c;x)$ denotes the Gaussian hypergeometric functions
(for the proof, see \cite{T3}).
In fact, these are included in the \textit{generalized complete elliptic integrals}
of Borwein \cite[Section 5.5]{BB3}. 
However, our argument needs neither knowledge of hypergeometric 
functions nor the elliptic function theory,
by virtue of our forms \eqref{eq:K_p(k)} and \eqref{eq:E_p(k)}
with generalized trigonometric functions.
 
(ii) It would be desirable to establish 
a formula of $\pi_p$ for any $p \neq 2,\,3,\,4$
but we have not been able to do this.
Our ultimate goal of this study
is to generalize the strategy of Salamin and Brent, 
based on the Legendre relation and the Gauss formula,
to the case $p \neq 2$.

(iii) Since $\pi_4=\pi/\sqrt{2}$,
we have a new formula of $\pi$ as follows:
$$\pi
=\frac{\displaystyle 2\sqrt{2}M_4\left(1,\frac{1}{\sqrt[4]{2}}\right)^2}
{\displaystyle 1-\sum_{n=1}^\infty 2^{n+1}\sqrt{a_n^4-b_n^4}},$$
where $\{a_n\},\ \{b_n\}$ are the sequences \eqref{eq:sequence}.
\end{rem}


\section{Formula of $\pi_p$ for $p=4$}


In this section, we will apply the complete $p$-elliptic integrals
for $p=4$ to compute $\pi_4$, and prove Theorem \ref{thm:main}.


In \cite{T3}, we have proved the following Lemmas 
\ref{lem:p-differential}-\ref{lem:p-legendre} on $K_p(k)$ and $E_p(k)$
for any $p \in (1,\infty)$ and $k \in (0,1)$. 
In the case $p=2$, these are all basic facts for the complete
elliptic integrals. 
For the proofs and other properties, we refer the reader to \cite{T3}.

As is traditional, 
we will use the notation $k':=(1-k^p)^{\frac1p},\ K_p'(k):=K_p(k')$ and $E_p'(k):=E_p(k')$.

\begin{lem}
\label{lem:p-differential}
$$\frac{dE_p}{dk}=\frac{E_p-K_p}{k},\quad
\frac{dK_p}{dk}=\dfrac{E_p-(k')^pK_p}{k(k')^p}.$$
\end{lem}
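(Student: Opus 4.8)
The plan is to establish both formulas by differentiating under the integral sign, and then, for $K_p$, reducing the resulting integral through a single integration by parts. Throughout I would use the two basic differentiation rules $\frac{d}{d\theta}\sin_p\theta=\cos_p\theta$ (immediate from the definition of $\arcsin_p$) and $\frac{d}{d\theta}\cos_p\theta=-\sin_p^{p-1}\theta\,\cos_p^{2-p}\theta$ (obtained by differentiating $\cos_p\theta=(1-\sin_p^p\theta)^{1/p}$), together with the algebraic identity $k^p\sin_p^p\theta=1-(1-k^p\sin_p^p\theta)$, which is the device that converts powers of $\sin_p$ into the integrands of $K_p$ and $E_p$. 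Since $k\in(0,1)$ keeps $1-k^p\sin_p^p\theta\geq(k')^p>0$ on $[0,\pi_p/2]$, there is no singularity and differentiation under the integral sign is routinely justified.

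For the $E_p$ formula I would differentiate $E_p(k)=\int_0^{\pi_p/2}(1-k^p\sin_p^p\theta)^{1/p}\,d\theta$ to get $\frac{dE_p}{dk}=-k^{p-1}\int_0^{\pi_p/2}\sin_p^p\theta\,(1-k^p\sin_p^p\theta)^{1/p-1}\,d\theta$, then write $k^{p-1}=k^p/k$ and use $k^p\sin_p^p\theta=1-(1-k^p\sin_p^p\theta)$ to split the integrand as $(1-k^p\sin_p^p\theta)^{1/p-1}-(1-k^p\sin_p^p\theta)^{1/p}$, the first being the $K_p$-integrand and the second the $E_p$-integrand. This yields $\frac{dE_p}{dk}=(E_p-K_p)/k$ directly.

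The $K_p$ formula is the substantive part. Differentiating $K_p(k)=\int_0^{\pi_p/2}(1-k^p\sin_p^p\theta)^{1/p-1}\,d\theta$ produces $\frac{dK_p}{dk}=(p-1)k^{p-1}A$, where $A:=\int_0^{\pi_p/2}\sin_p^p\theta\,(1-k^p\sin_p^p\theta)^{1/p-2}\,d\theta$ carries the awkward exponent $1/p-2$. To tame it I would integrate by parts using the auxiliary function $g(\theta):=\sin_p\theta\,\cos_p^{p-1}\theta\,(1-k^p\sin_p^p\theta)^{1/p-1}$, chosen so that the $\cos_p$ powers collapse: by the two rules above one finds $\frac{d}{d\theta}\cos_p^{p-1}\theta=-(p-1)\sin_p^{p-1}\theta$, the factor $\cos_p^{2-p}\theta$ cancelling exactly. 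Both boundary values vanish (at $\theta=0$ because $\sin_p 0=0$, at $\theta=\pi_p/2$ because $\cos_p^{p-1}(\pi_p/2)=0$), so $\int_0^{\pi_p/2}g'\,d\theta=0$. Expanding $g'$ by the product rule and repeatedly replacing $k^p\sin_p^p\theta$ by $1-(1-k^p\sin_p^p\theta)$, I would rewrite every term as a combination of the $K_p$- and $E_p$-integrands and $A$; this gives a linear relation of the form $K_p-\frac{1}{k^p}(K_p-E_p)-(p-1)(k')^pA=0$, whence $A=\frac{E_p-(k')^pK_p}{(p-1)k^p(k')^p}$. Substituting back into $\frac{dK_p}{dk}=(p-1)k^{p-1}A$ and simplifying with $(k')^p=1-k^p$ gives $\frac{dK_p}{dk}=\frac{E_p-(k')^pK_p}{k(k')^p}$.

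The main obstacle is the bookkeeping in the reduction of $A$: expanding $\int_0^{\pi_p/2}g'\,d\theta=0$ produces, besides $K_p$ and $E_p$, the two further integrals $\int_0^{\pi_p/2}\sin_p^{2p}\theta\,(1-k^p\sin_p^p\theta)^{1/p-2}\,d\theta$ and $\int_0^{\pi_p/2}(1-k^p\sin_p^p\theta)^{1/p-2}\,d\theta$. The reduction closes only because both of these collapse back onto $A$, $K_p$ and $E_p$ under the same substitution (applied once for the second integral and in squared form for the first, using $\cos_p^p\theta=1-\sin_p^p\theta$), so neither is a genuinely new unknown. Tracking the coefficients so that they assemble into the clean relation displayed above — in particular verifying that the $\int_0^{\pi_p/2}(1-k^p\sin_p^p\theta)^{1/p-2}\,d\theta$ contribution is absorbed rather than left over — is the delicate point, and the choice of exactly the power $p-1$ on $\cos_p$ in $g$ is precisely what forces the $\cos_p$ factors to cancel and makes this bookkeeping work out.
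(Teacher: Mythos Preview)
Your approach is correct. Note, however, that the present paper does not actually prove this lemma: it is quoted from the author's companion preprint (reference [T3]), with the remark ``For the proofs and other properties, we refer the reader to [T3].'' Your direct argument---differentiation under the integral sign for $E_p$, and for $K_p$ the integration-by-parts identity $\int_0^{\pi_p/2}g'\,d\theta=0$ with $g(\theta)=\sin_p\theta\,\cos_p^{p-1}\theta\,(1-k^p\sin_p^p\theta)^{1/p-1}$---is the standard one and does yield exactly the relation $K_p-\tfrac{1}{k^p}(K_p-E_p)-(p-1)(k')^pA=0$ you state, from which the formula for $dK_p/dk$ follows at once.

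One small simplification to your last paragraph: the auxiliary integral $\int_0^{\pi_p/2}(1-k^p\sin_p^p\theta)^{1/p-2}\,d\theta$ that worries you need not appear at all. If you reduce the term $k^p\sin_p^{2p}\theta\,(1-k^p\sin_p^p\theta)^{1/p-2}$ by factoring out one $\sin_p^p\theta$ and writing $k^p\sin_p^p\theta=1-(1-k^p\sin_p^p\theta)$ on the remaining factor, it becomes $\sin_p^p\theta\bigl[(1-k^p\sin_p^p\theta)^{1/p-2}-(1-k^p\sin_p^p\theta)^{1/p-1}\bigr]$, whose integral is $A-\tfrac{1}{k^p}(K_p-E_p)$. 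With this organization only $A$, $K_p$, and $E_p$ ever occur, and the bookkeeping closes immediately with no leftover unknown.
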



\begin{lem}
\label{lem:p-hypergeometric}
$K_p(k)$ and $K_p'(k)$ satisfy
$$\frac{d}{dk}\left(k(k')^p \frac{dy}{dk}\right)
=(p-1)k^{p-1}y,$$
that is
$$k(1-k^p)\frac{d^2y}{dk^2}+(1-(p+1)k^p)\frac{dy}{dk}
-(p-1)k^{p-1}y=0.$$
Moreover $E_p(k)$ and $E_p'(k)-K_p'(k)$ satisfy
$$(k')^p\frac{d}{dk}\left(k\frac{dy}{dk}\right)=-k^{p-1}y,$$
that is 
$$k(1-k^p)\frac{d^2y}{dk^2}+(1-k^p)\frac{dy}{dk}
+k^{p-1}y=0.$$
\end{lem}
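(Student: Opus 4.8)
The plan is to derive each equation directly from the two differentiation formulas of Lemma \ref{lem:p-differential}, treating the self-adjoint (divergence) form as primary and obtaining the expanded form afterward by the product rule. Throughout I will use the elementary identities $(k')^p=1-k^p$, $\frac{d}{dk}(k')^p=-pk^{p-1}$, and $\frac{dk'}{dk}=-k^{p-1}(k')^{1-p}$, the last of which governs all chain-rule computations involving the complementary modulus.

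For the first equation with $y=K_p$, the key observation is that Lemma \ref{lem:p-differential} rearranges exactly as $k(k')^p\frac{dK_p}{dk}=E_p-(k')^pK_p$, so the quantity inside the divergence is already known in closed form. Differentiating the right-hand side and substituting $\frac{dE_p}{dk}=(E_p-K_p)/k$ together with $(k')^p\frac{dK_p}{dk}=(E_p-(k')^pK_p)/k$, the $E_p$-contributions cancel over the common denominator $k$, while the surviving $K_p$-terms combine through $-k^p+pk^p$ to give $(p-1)k^{p-1}K_p$. This yields $\frac{d}{dk}(k(k')^p\frac{dK_p}{dk})=(p-1)k^{p-1}K_p$, and expanding the divergence by the product rule produces the stated second-order form.

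To pass to the complementary solution $K_p'(k)=K_p(k')$, I will exploit the invariance of the first operator $\mathcal{L}y:=\frac{d}{dk}(k(k')^p\frac{dy}{dk})-(p-1)k^{p-1}y$ under the involution $k\mapsto k'$. Writing $y(k)=K_p(k')$ and using $\frac{dk'}{dk}=-k^{p-1}(k')^{1-p}$, a short computation gives $k(k')^p\frac{dy}{dk}=-k^pk'\,(\text{derivative of }K_p\text{ at }k')$; carrying out the outer differentiation and repeatedly replacing $1-k^p$ by $(k')^p$ reduces the left operator to $k^{p-1}(k')^{1-p}$ times the first-kind operator evaluated at $k'$, which equals $(p-1)(k')^{p-1}K_p'$, so the product is exactly $(p-1)k^{p-1}K_p'$. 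Hence $K_p'$ also solves the first equation. (Equivalently, and in parallel with the second-kind argument below, one may verify this by the direct chain-rule identity $k(k')^p\frac{dK_p'}{dk}=k^pK_p'-E_p'$.)

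The second equation is handled analogously for $y=E_p$: from $k\frac{dE_p}{dk}=E_p-K_p$ one gets $\frac{d}{dk}(k\frac{dE_p}{dk})=\frac{dE_p}{dk}-\frac{dK_p}{dk}$, and inserting both derivatives and clearing $k(k')^p$ cancels every $K_p$-term while the $E_p$-terms collapse via $(k')^p-1=-k^p$ to $-k^{p-1}E_p/(k')^p$, giving $(k')^p\frac{d}{dk}(k\frac{dE_p}{dk})=-k^{p-1}E_p$. The genuinely delicate point, and the one I expect to be the main obstacle, is that the second operator is \emph{not} invariant under $k\mapsto k'$, so the correct complementary solution is not $E_p'$ but the combination $w:=E_p'-K_p'$. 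To verify $w$, I will differentiate $E_p'$ and $K_p'$ by the chain rule, noting that for the argument $s=k'$ one has $1-s^p=k^p$; Lemma \ref{lem:p-differential} evaluated at $k'$ then yields $\frac{dE_p'}{dk}=-k^{p-1}(k')^{-p}(E_p'-K_p')$ and $\frac{dK_p'}{dk}=-(E_p'-k^pK_p')/(k(k')^p)$. Subtracting and simplifying over $k(k')^p$ produces the clean identity $k\frac{dw}{dk}=E_p'$; differentiating once more and reusing the formula for $\frac{dE_p'}{dk}$ closes the argument as $(k')^p\frac{d}{dk}(k\frac{dw}{dk})=(k')^p\frac{dE_p'}{dk}=-k^{p-1}w$, after which the expanded form follows by the product rule. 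All remaining steps are routine algebra on the two identities of Lemma \ref{lem:p-differential}; the only nonobvious ingredient is recognizing that $k\frac{dw}{dk}=E_p'$ and that precisely $E_p'-K_p'$, rather than $E_p'$ alone, satisfies the second-order equation.
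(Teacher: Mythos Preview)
The paper does not actually prove this lemma here: it is quoted from \cite{T3} with the remark ``For the proofs and other properties, we refer the reader to \cite{T3}.'' So there is no in-paper argument to compare against.

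Your argument is correct and self-contained. Starting from Lemma~\ref{lem:p-differential}, you rewrite $k(k')^p\,dK_p/dk=E_p-(k')^pK_p$ and differentiate; the $E_p$-terms cancel and the $K_p$-terms combine via $-k^p+pk^p$ to give $(p-1)k^{p-1}K_p$, as you say. For $K_p'$ your symmetry computation is right: with $s=k'$ one finds $\mathcal{L}y=k^{p-1}(k')^{1-p}\,\mathcal{L}_s K_p=k^{p-1}(k')^{1-p}\cdot 0$, and your alternative identity $k(k')^p\,dK_p'/dk=k^pK_p'-E_p'$ also checks out directly. For the second equation, the $E_p$ case is an immediate consequence of $k\,dE_p/dk=E_p-K_p$, and for $w=E_p'-K_p'$ your key identity $k\,dw/dk=E_p'$ follows from subtracting the chain-rule formulas for $dE_p'/dk$ and $dK_p'/dk$ over the common denominator $k(k')^p$; one more differentiation then closes the argument exactly as you describe. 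The only cosmetic point is that what you call a ``proof proposal'' already constitutes a complete proof: every step reduces to the two identities of Lemma~\ref{lem:p-differential} together with $(k')^p=1-k^p$.
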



\begin{lem}
\label{lem:p-legendre}
\begin{equation}
\label{eq:p-legendre}
K_p'(k)E_p(k)+K_p(k)E_p'(k)-K_p(k)K_p'(k)=\frac{\pi_p}{2}.
\end{equation}
\end{lem}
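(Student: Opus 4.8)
The plan is to show that the left-hand side of \eqref{eq:p-legendre}, regarded as a function
$$L(k):=K_p'(k)E_p(k)+K_p(k)E_p'(k)-K_p(k)K_p'(k),\qquad k\in(0,1),$$
is constant, and then to identify the constant by a boundary limit. First I would assemble the derivatives of the four functions appearing in $L$. For $K_p$ and $E_p$ these are given by Lemma \ref{lem:p-differential}. For the complementary functions I would differentiate through $k'=(1-k^p)^{1/p}$, using
$$\frac{dk'}{dk}=-\frac{k^{p-1}}{(k')^{p-1}}$$
together with the chain rule and Lemma \ref{lem:p-differential} evaluated at the modulus $k'$ (whose own complementary modulus is $k$). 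This yields
$$\frac{d}{dk}K_p'(k)=-\frac{E_p'(k)-k^pK_p'(k)}{k(k')^p},\qquad
\frac{d}{dk}E_p'(k)=-\frac{\left(E_p'(k)-K_p'(k)\right)k^{p-1}}{(k')^p}.$$

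Next I would differentiate $L$ by the product rule, substitute the four derivative formulas, and collect the resulting terms according to the product that multiplies them, namely $E_pE_p'$, $E_pK_p'$, $K_pE_p'$, and $K_pK_p'$. The coefficient of $E_pE_p'$ cancels immediately. For the coefficients of $E_pK_p'$ and of $K_pE_p'$, clearing the common denominator $k(k')^p$ produces the numerator $k^p+(k')^p-1$, which vanishes because $k^p+(k')^p=1$; the remaining $K_pK_p'$ terms cancel directly. Hence $L'(k)\equiv0$, so $L$ is constant on $(0,1)$. This computation is the longest part but is entirely routine once the derivative formulas are in hand.

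The genuine obstacle is evaluating the constant, which I would do by letting $k\to0^+$ after writing $L(k)=K_p'(k)\left(E_p(k)-K_p(k)\right)+K_p(k)E_p'(k)$. As $k\to0$ one has $K_p(k)\to\pi_p/2$ and $E_p'(k)=E_p(k')\to E_p(1)=\int_0^{\pi_p/2}\cos_p\theta\,d\theta=1$, using $\frac{d}{d\theta}\sin_p\theta=\cos_p\theta$ and $\sin_p(\pi_p/2)=1$; thus $K_p(k)E_p'(k)\to\pi_p/2$. The remaining product is indeterminate because $K_p'(k)=K_p(k')\to\infty$ as $k'\to1$. To control it I would show directly that $E_p(k)-K_p(k)=O(k^p)$: combining the integrands in \eqref{eq:K_p(k)} and \eqref{eq:E_p(k)} gives
$$E_p(k)-K_p(k)=-k^p\int_0^{\frac{\pi_p}{2}}\sin_p^p\theta\,(1-k^p\sin_p^p\theta)^{\frac1p-1}\,d\theta,$$
whose integral stays bounded as $k\to0$.

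Since the singularity of $K_p(k')$ at $k'=1$ is only logarithmic, one has $k^pK_p'(k)\to0$, and therefore $K_p'(k)\left(E_p(k)-K_p(k)\right)\to0$. Consequently $L(k)\to\pi_p/2$, and being constant, $L\equiv\pi_p/2$, which is \eqref{eq:p-legendre}. The one point needing care is the logarithmic bound on $K_p(k')$ near $k'=1$, which I would obtain from the representation
$$K_p(k')=\int_0^{\frac{\pi_p}{2}}\frac{d\theta}{\left(\cos_p^p\theta+k^p\sin_p^p\theta\right)^{1-\frac1p}}$$
(valid since $(k')^p=1-k^p$) and an elementary estimate of the contribution near $\theta=\pi_p/2$, where $\cos_p\theta$ is small. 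I expect this limit analysis, rather than the differentiation, to be the crux of the argument.
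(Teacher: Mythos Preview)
Your proposal is correct. The paper itself does not prove Lemma~\ref{lem:p-legendre}; it merely states the result and refers the reader to \cite{T3} for the proof (see the sentence preceding Lemma~\ref{lem:p-differential}). So there is no in-text argument to compare against.

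That said, your approach---showing $L'(k)\equiv0$ via Lemma~\ref{lem:p-differential} and the chain rule, then evaluating the constant by a limit---is exactly the classical proof of the Legendre relation for $p=2$, and it carries over to general $p$ without essential change. Your algebra in the differentiation step checks out: the $E_pE_p'$ terms cancel directly, the $E_pK_p'$ and $K_pE_p'$ coefficients each reduce to the factor $k^p+(k')^p-1=0$, and the $K_pK_p'$ terms cancel in pairs. The evaluation $E_p(1)=1$ via $(\sin_p\theta)'=\cos_p\theta$ is correct, and your estimate $E_p(k)-K_p(k)=O(k^p)$ follows immediately from the integral identity you wrote. The logarithmic growth of $K_p'(k)$ as $k\to0^+$ can be seen, as you suggest, by substituting $s=\sin_p\theta$ in the representation $K_p'(k)=\int_0^{\pi_p/2}(\cos_p^p\theta+k^p\sin_p^p\theta)^{\frac1p-1}\,d\theta$ and analyzing the resulting integral near $s=1$; the integrand there behaves like $(1-s)^{-1/p}\bigl(p(1-s)+k^p\bigr)^{\frac1p-1}$, which after rescaling gives a contribution of order $\log(1/k)$. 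This is indeed the only point requiring care, and you have identified it correctly.
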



In what follows, we consider only the case $p=4$.

\begin{prop}
\label{prop:KE}
Let $0 \leq k<1$ and $k'=\sqrt[4]{1-k^4}$. Then
\begin{enumerate}
\item $K_4(k)=\dfrac{1}{\sqrt{1+3k^2}} K_4\left(\sqrt[4]{\dfrac{8(1+k^2)k^2}{(1+3k^2)^2}}\right)$,
\item $K_4(k)=\dfrac{2}{\sqrt{1+3(k')^2}} K_4\left(\sqrt{\dfrac{1-(k')^2}{1+3(k')^2}}\right)$,
\item $E_4(k)=\dfrac{\sqrt{1+3k^2}}{2}E_4\left(\sqrt[4]{\dfrac{8(1+k^2)k^2}{(1+3k^2)^2}}\right)
+\dfrac{1-k^2}{2}K_4(k)$,
\item $E_4(k)=\sqrt{1+3(k')^2}E_4\left(\sqrt{\dfrac{1-(k')^2}{1+3(k')^2}}\right)-(k')^2K_4(k)$.
\end{enumerate}
\end{prop}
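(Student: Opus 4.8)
The plan is to derive all four identities from a single source: a substitution in the integral $I_4(a,b)$ that realizes one step of the Gauss-type iteration \eqref{eq:sequence}, specialized to the case $p=4$. Recall that $K_4(k)=I_4(1,k')$ and $E_4(k)=J_4(1,k')$. First I would establish the ``descent'' relation $a^2 I_4(a,b)=(a')^2 I_4(a',b')$, where $a'=\sqrt{(a^2+3b^2)/4}$ and $b'=\sqrt[4]{(a^2+b^2)b^2/2}$, by producing an explicit change of variables $\phi=\phi(\theta)$ (a generalized Landen-type substitution) under which
\[
\frac{d\theta}{(a^4\cos_4^4\theta+b^4\sin_4^4\theta)^{3/4}}
=\frac{(a')^2}{a^2}\,\frac{d\phi}{((a')^4\cos_4^4\phi+(b')^4\sin_4^4\phi)^{3/4}}.
\]
The right substitution should be guessable by analogy with the classical Landen transformation and with the $p=3$ computation of \cite{T3}; the candidate is something like $\sin_4^4\phi$ being a rational (in the $\sin_4^4,\cos_4^4$ sense) function of $\theta$ adapted to the quartic. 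Once the $I_4$ descent is in hand, substituting $(a,b)=(1,k')$ gives, after simplifying the resulting moduli, identity (ii) (the half-argument form), and substituting instead $(a,b)=(1,k)$ and using $K_4(k)=I_4(1,k')$ together with the symmetry $I_4(a,b)$ under $a\leftrightarrow b$ combined with the $\cos_4/\sin_4$ swap gives identity (i). The precise bookkeeping of which modulus appears — verifying that $\sqrt{(1+k^2)/\cdots}$ and $\sqrt[4]{8(1+k^2)k^2/(1+3k^2)^2}$ are exactly the complementary pair produced by the iteration — is the routine part.

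For the second-kind identities (iii) and (iv), I would run the \emph{same} substitution on $J_4(a,b)$, which does not transform as cleanly: one gets $J_4$ at the new arguments plus a correction term that is itself expressible through $I_4$ (equivalently $K_4$) at the old arguments, because the Jacobian factor $(d\theta/d\phi)$ contributes a piece of the form (rational in $\cos_4^4,\sin_4^4$)$\times(a^4\cos_4^4\theta+b^4\sin_4^4\theta)^{-3/4}$ whose integral is a multiple of $I_4(a,b)$. Collecting terms yields a relation $a\cdot{}$(combination of $J_4$) $=$ (combination of $J_4$ at new args) $+$ (multiple of $a^2I_4(a,b))$; specializing $(a,b)=(1,k')$ and $(a,b)=(1,k)$ and dividing through by the appropriate powers of $a'$ produces (iv) and (iii) respectively, with the $(k')^2K_4(k)$ and $\frac{1-k^2}{2}K_4(k)$ tails coming precisely from that correction term. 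An alternative, possibly cleaner route for (iii)--(iv) once (i)--(ii) are known: differentiate (i) and (ii) in $k$ and feed in Lemma \ref{lem:p-differential} to solve for $E_4$ at the transformed modulus; this trades the delicate integral manipulation for algebra with the known derivative formulas.

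The main obstacle I expect is \emph{finding and verifying the correct substitution} $\phi=\phi(\theta)$ for $p=4$ and checking it is a bijection of $[0,\pi_4/2]$ onto itself with the claimed differential identity — the generalized trigonometric functions $\sin_4,\cos_4$ do not satisfy the Pythagorean identity in the usual linear form (only $\sin_4^4+\cos_4^4$ behaves well), so the algebraic identities that make the classical Landen substitution work must be replaced by their degree-$4$ analogues, and one must be careful that $d(\sin_4\phi)=\cos_4\phi\,d\phi$ but $d(\cos_4\phi)=-(\sin_4\phi/\cos_4\phi)^{?}\,\cdots$ — here $\cos_4'{x}=-\sin_4^{3}x/\cos_4^{3}x\cdot(\text{?})$; in fact $(\cos_4 x)'=-\sin_4^{p-1}x\,(\cos_4 x)^{1-p}$ with $p=4$, i.e. $-\sin_4^3x/\cos_4^3 x$ — so the Jacobian computation is genuinely more involved than the classical one. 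I would handle this by first recording the needed degree-$4$ algebraic identities (expressing $a^4\cos_4^4\theta+b^4\sin_4^4\theta$ as a perfect structure under the substitution) and the derivative rules for $\sin_4,\cos_4$ as preliminary lemmas, after which the four claimed formulas follow by specialization and simplification.
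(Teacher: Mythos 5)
Your plan hinges on a single unproved ingredient: an explicit Landen-type change of variables $\phi=\phi(\theta)$ on $[0,\pi_4/2]$ realizing the descent $a^2I_4(a,b)=(a')^2I_4(a',b')$. You never exhibit this substitution, only assert that it "should be guessable," and for generalized trigonometric functions there is no reason to expect one in closed form: the classical Landen substitution rests on addition formulas for $\sin,\cos$ that have no analogue for $\sin_4,\cos_4$. This is precisely the difficulty the paper is designed to avoid. Its proof of (ii) is entirely different: it shows via Lemma \ref{lem:p-hypergeometric} that $K_4(k')$ satisfies the second-order equation $\frac{d}{dk}\left(k(k')^4\frac{dy}{dk}\right)=3k^3y$, verifies by direct differentiation (using Lemma \ref{lem:p-differential}) that the right-hand side of (ii) satisfies the same equation, and concludes equality from the theory of regular singular points ($k=1$ is a regular singular point with both indicial roots $0$, and the two functions agree there). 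The descent relation for $I_4$ (Lemma \ref{lem:I}) is then a \emph{consequence} of (ii), not its source --- the logical order is the reverse of yours. As written, your argument therefore has a genuine gap at its foundation: the entire content of the proof is the substitution you do not supply.

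A second concrete error: you propose to obtain (i) from the descent relation "using the symmetry of $I_4(a,b)$ under $a\leftrightarrow b$ combined with the $\cos_4/\sin_4$ swap." For $p\neq2$ the substitution $\theta\mapsto\pi_p/2-\theta$ does \emph{not} interchange $\sin_p$ and $\cos_p$ (the identity $\sin_p(\pi_p/2-\theta)=\cos_p\theta$ fails), so $I_4(a,b)$ is not symmetric in $a,b$; indeed $I_4(a,b)=a^{-3}K_4\bigl(\sqrt[4]{1-b^4/a^4}\bigr)$ does not even remain real after swapping $a\geq b$. In the paper, (i) follows from (ii) by the purely algebraic inversion of the modulus map (setting the new modulus equal to $\ell$ and solving for $k$). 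Your fallback for (iii)--(iv) --- differentiate (i)/(ii) and feed in Lemma \ref{lem:p-differential} --- is sound and is exactly what the paper does for (iv); but it presupposes (i)--(ii), which your proposal does not actually establish.
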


\begin{proof}
First we will prove (ii), which is equivalent to 
\begin{equation}
\label{eq:RCT}
K_4(k')=\frac{2}{\sqrt{1+3k^2}}K_4\left(\sqrt{\frac{1-k^2}{1+3k^2}}\right).
\end{equation}
We have known from Lemma 
\ref{lem:p-hypergeometric} that $K_4(k')$ satisfies
\begin{equation}
\label{eq:hRCT}
\frac{d}{dk}\left(k(k')^4\frac{dy}{dk}\right)=3k^3y.
\end{equation}
To show \eqref{eq:RCT} we will verify that 
the function of right-hand side of \eqref{eq:RCT}
also satisfies \eqref{eq:hRCT}.
Now we let
$$f(k)=\frac{2}{\sqrt{1+3k^2}}K_4\left(\sqrt{\frac{1-k^2}{1+3k^2}}\right).$$
Applying Lemma \ref{lem:p-differential}
we have
\begin{equation}
\label{eq:dif}
\frac{df(k)}{dk}=\frac{k}{1-k^2}f(k)-\frac{\sqrt{1+3k^2}}{k(k')^4}E_4(\ell),
\end{equation}
where $\ell=\sqrt{(1-k^2)/(1+3k^2)}$.
Thus, differentiating both sides of 
$$k(k')^4\frac{df(k)}{dk}
=(k^2+k^4)f(k)-\sqrt{1+3k^2}E_4(\ell)$$
gives
\begin{multline}
\label{eq:difdif}
\frac{d}{dk}\left(k(k')^4\frac{df(k)}{dk}\right)\\
=(2k+4k^3)f(k)+(k^2+k^4)\frac{df(k)}{dk}
-\left(\frac{3k}{\sqrt{1+3k^2}}E_4(\ell)+\sqrt{1+3k^2}\frac{dE_4(\ell)}{dk}\right).
\end{multline}
Here, by Lemma \ref{lem:p-differential}
$$\frac{dE_4(\ell)}{dk}
=\frac{2k}{(1-k^2)\sqrt{1+3k^2}}f(k)-\frac{4k}{(1-k^2)(1+3k^2)}E_4(\ell).$$
Applying this and \eqref{eq:dif} to \eqref{eq:difdif}, 
we see that the right-hand side of 
\eqref{eq:difdif} is equal to $3k^3f(k)$.  
This shows that $f(k)$ also satisfies \eqref{eq:hRCT} as $K_4(k')$ does.

The equation \eqref{eq:hRCT} has a regular singular point at $k=1$
and the roots of the associated indicial equation are both $0$.
Thus, it follows from the theory of ordinary differential equations 
that the functions $K_4(k')$ and $f(k)$, which agree at $k=1$, must be equal.
This concludes the assertion of (ii).

Next we will show (i), (iv) and (iii) in this order.

(i) In (ii), setting
$$\sqrt{\dfrac{1-(k')^2}{1+3(k')^2}}=\ell,$$
we get $0 \leq \ell<1$ and 
$$k'=\sqrt{\dfrac{1-\ell^2}{1+3\ell^2}},\quad 
k=\sqrt[4]{\dfrac{8(1+\ell^2)\ell^2}{(1+3\ell^2)^2}}.$$
Then (ii) is equivalent to 
$$K_4\left(\sqrt[4]{\dfrac{8(1+\ell^2)\ell^2}{(1+3\ell^2)^2}}\right)
=\sqrt{1+3\ell^2}K_4(\ell).$$
Replacing $\ell$ by $k$, we obtain (i).

(iv) Let $\ell$ be the number above, then 
$$\frac{d\ell}{dk}
=\frac{4k\sqrt{1+(k')^2}}{(1+3(k')^2)^{\frac32}(k')^2}.$$
It follows from (ii) that $\sqrt{1+3(k')^2}K_4(k)=2K_4(\ell)$.
Differentiating both sides in $k$, we have
\begin{multline*}
-\frac{3k^3}{\sqrt{1+3(k')^2}(k')^2}K_4(k)
+\frac{\sqrt{1+3(k')^2}}{k(k')^4}E_4(k)-\frac{\sqrt{1+3(k')^2}}{k}K_3(k)\\
=\frac{8k\sqrt{1+(k')^2}}{(1+3(k')^2)^{\frac32}(k')^2\ell (\ell')^4}E_4(\ell)
-\frac{8k\sqrt{1+(k')^2}}{(1+3(k')^2)^{\frac32}(k')^2\ell} K_4(\ell).
\end{multline*}
Applying 
$$\ell=\sqrt{\frac{1-(k')^2}{1+3(k')^2}},\quad 
\ell'=\sqrt[4]{\frac{8(1+(k')^2)(k')^2}{(1+3(k')^2)^2}}$$
and (ii), we see that the right-hand side is written as   
$$\frac{1+3(k')^2}{k(k')^4}E_4(\ell)-\frac{4(1+(k')^2)}{\sqrt{1+3(k')^2}k(k')^2}K_4(k).$$
Thus we have
$$\frac{\sqrt{1+3(k')^2}}{k(k')^4}E_4(k)
=\frac{1+3(k')^2}{k(k')^4}E_4(\ell)-\frac{\sqrt{1+3(k')^2}}{k(k')^2}K_4(k).$$
Multiplying this by $k(k')^4/\sqrt{1+3(k')^2}$, we obtain (iv).

(iii) It is obvious that (iv) can be written in $\ell$, that is,
$$
E_4\left(\sqrt[4]{\frac{8(1+\ell^2)\ell^2}{(1+3\ell^2)^2}}\right)
=\frac{2}{\sqrt{1+3\ell^2}}E_3(\ell)-\frac{1-\ell^2}{1+3\ell^2}
K_4\left(\sqrt[4]{\frac{8(1+\ell^2)\ell^2}{(1+3\ell^2)^2}}\right).
$$
From (i) we have (iii). The proof is complete.
\end{proof}

\begin{rem}
In fact, Proposition \ref{prop:KE} (ii) is equivalent to 
the following identity by Ramanujan (see \cite[Theorem 9.4, p.\,146]{Be}). 
\begin{equation*}
F\left(\frac14,\frac34;1;1-\left(\frac{1-x}{1+3x}\right)^2\right)
=\sqrt{1+3x}F\left(\frac14,\frac34;1;x^2\right),
\end{equation*}
The proof of (ii) above makes no use of identities of hypergeometric functions
and gives a new and elementary proof of Ramanujan's identity.
\end{rem}

We will write $I_4(a,b)$ and $J_4(a,b)$ simplicity
$I(a,b)$ and $J(a,b)$ respectively when no confusion can arise.

The next lemma is decisive to show \eqref{eq:K_4}.
\begin{lem}
\label{lem:I}
For $a \geq b>0$,
$$a^2I(a,b)=\frac{a^2+3b^2}{4}
I\left(\sqrt{\frac{a^2+3b^2}{4}},\sqrt[4]{\frac{(a^2+b^2)b^2}{2}}\right).$$
\end{lem}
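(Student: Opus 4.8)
The plan is to reduce Lemma \ref{lem:I} to the already-established Proposition \ref{prop:KE}(i) by exploiting the homogeneity of $I$ and rewriting both sides in terms of $K_4$. First I would record the scaling identity $I(a,b)=a^{-2}I(1,b/a)$, which follows immediately from the definition of $I_4$ by factoring $a^p=a^4$ out of $(a^4\cos_4^4\theta+b^4\sin_4^4\theta)^{3/4}$; likewise $I(1,k')=K_4(k)$ with $k'=\sqrt[4]{1-k^4}$, as noted in the excerpt. Thus, writing $k'=b/a$ (so $0<k'\le 1$ and $k=\sqrt[4]{1-(b/a)^4}$), the left-hand side becomes $a^2I(a,b)=K_4(k)$.

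Next I would massage the right-hand side the same way. Set $A=\sqrt{(a^2+3b^2)/4}$ and $B=\sqrt[4]{(a^2+b^2)b^2/2}$. Then $A^2 I(A,B)=I(1,B/A)$, and $B/A$ should turn out to be exactly the new modulus $\ell'=\sqrt[4]{1-\ell^4}$ corresponding to the argument appearing in Proposition \ref{prop:KE}(i). Concretely, with $\kappa:=b/a=k'$ one computes
\begin{equation*}
\frac{B^4}{A^4}=\frac{(a^2+b^2)b^2/2}{((a^2+3b^2)/4)^2}
=\frac{8(1+\kappa^2)\kappa^2}{(1+3\kappa^2)^2},
\end{equation*}
so $B/A=\sqrt[4]{8(1+\kappa^2)\kappa^2/(1+3\kappa^2)^2}$ and hence $A^2I(A,B)=K_4\!\left(\bigl(1-\tfrac{8(1+\kappa^2)\kappa^2}{(1+3\kappa^2)^2}\bigr)^{1/4}\right)$. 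A short algebraic simplification of $1-\tfrac{8(1+\kappa^2)\kappa^2}{(1+3\kappa^2)^2}=\tfrac{(1-\kappa^2)^2}{(1+3\kappa^2)^2}$ shows this equals $K_4\!\left(\sqrt{(1-\kappa^2)/(1+3\kappa^2)}\right)$, i.e. $K_4(\ell)$ with $\ell=\sqrt{(1-\kappa^2)/(1+3\kappa^2)}$. Finally, the right-hand side of the claimed identity is $\tfrac{a^2+3b^2}{4}I(A,B)=A^2I(A,B)=K_4(\ell)$, while Proposition \ref{prop:KE}(i) — applied with its "$k$" taken to be our $\kappa=k'$ — says precisely $K_4(\kappa)=\tfrac{1}{\sqrt{1+3\kappa^2}}K_4(\ell')$ where $\ell'$ is the fourth-root expression; reading the statement in the equivalent form displayed just after the proof of (ii) (namely $K_4(\sqrt[4]{8(1+\kappa^2)\kappa^2/(1+3\kappa^2)^2})=\sqrt{1+3\kappa^2}K_4(\kappa)$ with $\kappa$ there being our $\ell$, then relabeling) yields $K_4(k)=K_4(\ell)$-type matching. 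I would arrange the bookkeeping of which modulus plays which role carefully so that the two sides coincide.

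The main obstacle I anticipate is not any deep idea but the modulus bookkeeping: Proposition \ref{prop:KE} is stated with the roles of $k$ and $k'$ already once swapped (part (i) is derived from part (ii) by a substitution $\ell\leftrightarrow k$), and I must be scrupulous that the $k'=b/a$ I introduce for $I(a,b)$ feeds into Proposition \ref{prop:KE} on the correct side so that the transformed argument $\sqrt[4]{8(1+\kappa^2)\kappa^2/(1+3\kappa^2)^2}$ matches $B/A$ rather than its complementary modulus. A secondary, purely computational point is verifying the two algebraic identities $B^4/A^4 = 8(1+\kappa^2)\kappa^2/(1+3\kappa^2)^2$ and $1-B^4/A^4 = (1-\kappa^2)^2/(1+3\kappa^2)^2$, together with the boundary case $b/a=1$ (i.e. $a=b$), where both sides reduce to $I(a,a)=a^{-2}I(1,1)=a^{-2}\cdot\tfrac{\pi_4}{2}$ and consistency must be checked. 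Once the labels are pinned down, the proof is a two-line chain: $a^2I(a,b)=K_4(k)=K_4(\ell)=A^2I(A,B)=\tfrac{a^2+3b^2}{4}I(A,B)$.
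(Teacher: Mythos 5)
Your overall strategy is exactly the paper's: use the homogeneity of $I_4$ and the identity $I(1,k')=K_4(k)$ to convert both sides of the lemma into values of $K_4$, then invoke the quadratic transformation of Proposition \ref{prop:KE}. But there is a concrete error that breaks the chain. Since the integrand of $I_4(a,b)$ is $(a^4\cos_4^4\theta+b^4\sin_4^4\theta)^{-3/4}$, factoring out $a^4$ produces $(a^4)^{-3/4}=a^{-3}$, so the scaling identity is $I(a,b)=a^{-3}I(1,b/a)$, not $a^{-2}I(1,b/a)$. Consequently $a^2I(a,b)=a^{-1}K_4(k)$ (with $k'=b/a$) and $A^2I(A,B)=A^{-1}K_4(\ell)$, and the lemma is equivalent to
$$K_4(k)=\frac{a}{A}\,K_4(\ell)=\frac{2}{\sqrt{1+3(b/a)^2}}\,K_4(\ell),$$
which is precisely Proposition \ref{prop:KE}(ii) (in the form \eqref{eq:RCT}, i.e.\ with $k$ replaced by $k'$). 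Your version drops the factors $a^{-1}$ and $A^{-1}$ and therefore asks for $K_4(k)=K_4(\ell)$ with no prefactor; since $K_4$ is strictly increasing this would force $k=\ell$, which is false in general (take $b/a=1/\sqrt{2}$: then $k=(3/4)^{1/4}\approx 0.93$ while $\ell=1/\sqrt{5}\approx 0.45$). So the final ``two-line chain'' as written proves a false identity.

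The fix is mechanical and brings you back to the paper's proof: carry the correct powers, so that the left side is $\tfrac1a K_4'(b/a)$, apply \eqref{eq:RCT} to get $\tfrac{2}{\sqrt{a^2+3b^2}}K_4\bigl(\sqrt{(a^2-b^2)/(a^2+3b^2)}\bigr)$, rewrite this as $\tfrac{2}{\sqrt{a^2+3b^2}}I\bigl(1,\sqrt[4]{8(a^2+b^2)b^2/(a^2+3b^2)^2}\bigr)$, and rescale by $\lambda=\sqrt{a^2+3b^2}/2$ using $I(1,c)=\lambda^3 I(\lambda,\lambda c)$; the factor $\lambda^3\cdot\tfrac{2}{\sqrt{a^2+3b^2}}=\tfrac{a^2+3b^2}{4}$ then comes out correctly. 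Your algebraic verifications of $B^4/A^4=8(1+\kappa^2)\kappa^2/(1+3\kappa^2)^2$ and $1-B^4/A^4=(1-\kappa^2)^2/(1+3\kappa^2)^2$ are correct and are exactly the computations needed; only the homogeneity degree (and the attendant bookkeeping of which part of Proposition \ref{prop:KE} is being used) needs repair.
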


\begin{proof}
From Proposition \ref{prop:KE} (ii) (with $k$ replaced by $k'$) we get
\begin{align*}
a^2I(a,b)
&=
\frac1a K_4'\left(\frac{b}{a}\right)\\
&=\frac{2}{\sqrt{a^2+3b^2}}K_4\left(\sqrt{\frac{a^2-b^2}{a^2+3b^2}}\right)\\
&=\frac{2}{\sqrt{a^2+3b^2}}I\left(1,\sqrt[4]{\frac{8(a^2+b^2)b^2}{(a^2+3b^2)^2}}\right)\\
&=\frac{a^2+3b^2}{4}I\left(\frac{\sqrt{a^2+3b^2}}{2},\sqrt[4]{\frac{(a^2+b^2)b^2}{2}}\right).
\end{align*}
This proves the lemma.
\end{proof}

Let $a \geq b>0$. Consider the sequences
$\{a_n\}$ and $\{b_n\}$ satisfying $a_0=a, b_0=b$ and 
\eqref{eq:sequence}, i.e.
$$a_{n+1}=\sqrt{\frac{a_n^2+3b_n^2}{4}}, \quad 
b_{n+1}=\sqrt[4]{\frac{(a_n^2+b_n^2)b_n^2}{2}},
\quad n=0,1,2,\cdots.$$
It is easy to see that $a_n \geq b_n$ for any $n$,
$\{a_n\}$ is decreasing and $\{b_n\}$ is increasing. 
Hence each sequence converges to 
a limit as $n \to \infty$. Moreover, since
\begin{equation}
\label{eq:cubic}
a_{n+1}^2-b_{n+1}^2 \leq \frac{a_n^2+3b_n^2}{4}-b_n^2 = \frac14(a_n^2-b_n^2),
\end{equation}
these limits are same. We will denote by $M_4(a,b)$ the common limit
for $a$ and $b$.     

Lemma \ref{lem:I} implies that
$\{a_n^2I(a_n,b_n)\}$ is a constant sequence:
\begin{equation}
\label{eq:constant}
a^2I(a,b)=a_1^2I(a_1,b_1)=a_2^2I(a_2,b_2)=\cdots=a_n^2I(a_n,b_n)=\cdots.
\end{equation}
Letting $n \to \infty$ in \eqref{eq:constant} we have

\begin{prop}
\label{prop:KM}
For $a \geq b>0$
$$a^2I(a,b)=\frac{\pi_4}{2}\frac{1}{M_4(a,b)}.$$
Therefore, \eqref{eq:K_4} immediately follows from setting $a=1$ and $b=k'$.
\end{prop}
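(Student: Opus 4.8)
The plan is to pass to the limit $n\to\infty$ in the invariance relation \eqref{eq:constant}. By Lemma \ref{lem:I} together with the recursion \eqref{eq:sequence}, one iteration step sends $a^2 I(a,b)$ to $a_1^2 I(a_1,b_1)$; iterating gives \eqref{eq:constant}, i.e. $a^2 I(a,b)=a_n^2 I(a_n,b_n)$ for every $n$. Writing $m=M_4(a,b)$, I already know $a_n\to m$ and $b_n\to m$, so it remains to show that $a_n^2 I(a_n,b_n)\to m^2 I(m,m)$ and then to evaluate $I(m,m)$.

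For the limit, note that since $\{a_n\}$ is decreasing, $\{b_n\}$ is increasing, and $a_n\geq b_n$, every $a_n$ and $b_n$ lies in $[b,a]$; in particular $m\geq b>0$, so all terms stay bounded away from $0$. Using the identity $\cos_4^4\theta+\sin_4^4\theta=1$, which is immediate from the definition $\cos_4\theta=(1-\sin_4^4\theta)^{1/4}$, the denominator appearing in the integrand of $I(a_n,b_n)$ obeys $a_n^4\cos_4^4\theta+b_n^4\sin_4^4\theta\geq b^4(\cos_4^4\theta+\sin_4^4\theta)=b^4$, so that integrand is dominated on $[0,\pi_4/2]$ by the constant $b^{-3}$. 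As $n\to\infty$ the integrand converges pointwise to the constant $m^{-3}$, so the dominated convergence theorem gives $I(a_n,b_n)\to I(m,m)$ and hence $a_n^2 I(a_n,b_n)\to m^2 I(m,m)$.

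To finish I would evaluate the diagonal integral, again using $\cos_4^4\theta+\sin_4^4\theta=1$:
$$I(m,m)=\int_0^{\frac{\pi_4}{2}}\frac{d\theta}{\left(m^4(\cos_4^4\theta+\sin_4^4\theta)\right)^{3/4}}=\frac{1}{m^3}\int_0^{\frac{\pi_4}{2}}d\theta=\frac{\pi_4}{2m^3}.$$
Combining this with the limit above yields $a^2 I(a,b)=m^2\cdot\frac{\pi_4}{2m^3}=\frac{\pi_4}{2}\frac{1}{M_4(a,b)}$, which is the claim; the concluding sentence of the proposition then follows by setting $a=1$, $b=k'$ and recalling $K_4(k)=I(1,k')$.

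I do not anticipate any serious obstacle. The one place requiring care is the interchange of limit and integral, and this is painless precisely because the hypothesis $b>0$ confines the entire iteration to the compact interval $[b,a]$, furnishing the uniform lower bound $b^4$ on the denominator. The genuinely decisive ingredient is the $p=4$ Pythagorean identity, which collapses the diagonal integrand to a constant and so makes the evaluation of $I(m,m)$ exact.
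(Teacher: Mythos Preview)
Your proof is correct and follows the same route as the paper: pass to the limit in the invariance relation \eqref{eq:constant}, using that $a_n,b_n\to M_4(a,b)$ and that the diagonal value $I(m,m)=\pi_4/(2m^3)$ follows from $\cos_4^4\theta+\sin_4^4\theta=1$. The paper states only the one-line idea (``letting $n\to\infty$ in \eqref{eq:constant}''), whereas you have supplied the justification for the limit interchange via dominated convergence and the uniform bound coming from $b_n\ge b>0$; this is exactly the detail the paper suppresses.
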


\begin{rem}
The sequence \eqref{eq:sequence} 
and the identity \eqref{eq:K_4} 
are similar to those in \cite[Theorem 3 (a)]{BBG1}.
However, our approach with $I_p(a,b)$ helps 
to prove \eqref{eq:K_4} in an elementary way
without the elliptic function theory.
\end{rem}

Let $I_n:=I(a_n,b_n),\ J_n:=J(a_n,b_n)$, then
\begin{lem}
\label{lem:IJ}
For $a \geq b>0$
$$2J_{n+1}-J_n=a_n^2b_n^2I_n, \quad n=0,1,2,\ldots.$$
\end{lem}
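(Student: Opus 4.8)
I want to prove the identity $2J_{n+1}-J_n=a_n^2b_n^2 I_n$ relating the second-kind auxiliary integrals at consecutive steps. Since everything here is expressed through the complete $p$-elliptic integrals, the strategy is to rewrite both sides in terms of $K_4$ and $E_4$, using the homogeneity relations $I(a,b)=a^{-1}K_4'(b/a)$ (equivalently $I(a,b)=\tfrac1a I_4(1,b/a)$, with $b/a\le 1$), and the analogous scaling for $J$. Writing $k:=b_n/a_n\in(0,1]$ and $k'=\sqrt[4]{1-k^4}=:(b_n/a_n)'$... actually it is cleaner to set $k' = b_n/a_n$ so that $J_n=a_nJ_4(1,k')=a_nE_4(k)$ with $k=\sqrt[4]{1-(b_n/a_n)^4}$. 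Then $a_{n+1},b_{n+1}$ are exactly the quantities appearing in Proposition \ref{prop:KE}; indeed Lemma \ref{lem:I} already identified $a_{n+1}=\tfrac12\sqrt{a_n^2+3b_n^2}$ and $b_{n+1}=\sqrt[4]{(a_n^2+b_n^2)b_n^2/2}$ as the pair for which Proposition \ref{prop:KE}(ii)/(i) applies after the substitution there.

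**Key steps, in order.** First, express $J_n$ and $I_n$ in terms of $E_4,K_4$ evaluated at a suitable modulus: with $b_n/a_n$ playing the role of $k'$ in Proposition \ref{prop:KE}, one gets $I_n = a_n^{-1}K_4'(b_n/a_n)$ and $J_n = a_n^{-1}E_4'(b_n/a_n)$ after a homogeneity rescaling of the integrand (pull $a_n$ out of $a_n^p\cos_p^p\theta+b_n^p\sin_p^p\theta$). Second, do the same for $n+1$: using the formula for $a_{n+1},b_{n+1}$, identify $J_{n+1}=a_{n+1}^{-1}E_4'(b_{n+1}/a_{n+1})$, and use Proposition \ref{prop:KE}(iv) — the relation $E_4(k)=\sqrt{1+3(k')^2}\,E_4(\sqrt{(1-(k')^2)/(1+3(k')^2)})-(k')^2K_4(k)$ — to express $E_4$ at the transformed modulus back in terms of $E_4,K_4$ at $b_n/a_n$. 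Third, substitute Proposition \ref{prop:KE}(ii) for the $K_4$ piece as well, so that $2J_{n+1}-J_n$ collapses to a single multiple of $K_4'(b_n/a_n)=a_nI_n$; the coefficient should come out to be $a_n^2b_n^2$ after simplifying the algebraic factors $\sqrt{a_n^2+3b_n^2}$, $(b_n/a_n)'$, etc. An alternative, cleaner route: since $J_n = J(a_n,b_n)$, differentiate the constant-sequence relation $a_n^2 I(a_n,b_n)=a^2I(a,b)$ in the spirit of how Proposition \ref{prop:KE}(iv) was derived from (ii), or use Lemma \ref{lem:p-differential} directly — write $J(a,b)$ via $E_4$ and exploit $dE_4/dk=(E_4-K_4)/k$ together with the relation between $(a_{n+1},b_{n+1})$ and $(a_n,b_n)$.

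**Main obstacle.** The genuinely delicate part is the bookkeeping in the third step: keeping straight whether $b_n/a_n$ enters as $k$ or as $k'$ in each invocation of Proposition \ref{prop:KE}, and checking that the transformed modulus $\sqrt{(1-(k')^2)/(1+3(k')^2)}$ appearing in parts (ii) and (iv) coincides with $b_{n+1}/a_{n+1}$ (or its complement) — this requires $(b_{n+1}/a_{n+1})^4 = (a_n^2+b_n^2)b_n^2 \cdot 2 /(a_n^2+3b_n^2)^2$, which should match $8(1+(k')^2)(k')^2/(1+3(k')^2)^2$ with $k'=b_n/a_n$ after clearing denominators. Once the moduli are correctly matched, the identity reduces to cancellation of elementary algebraic expressions in $a_n,b_n$, and the factor $a_n^2b_n^2$ should emerge; I expect no conceptual difficulty beyond careful algebra, and the homogeneity/scaling of $I$ and $J$ does most of the work. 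Verifying the base relations $I_n=a_n^{-1}K_4'(b_n/a_n)$ and $J_n=a_n^{-1}E_4'(b_n/a_n)$ is routine from the definitions of $I_4,J_4$ and $K_4(k)=I_4(1,k'),\,E_4(k)=J_4(1,k')$.
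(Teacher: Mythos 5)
Your plan is essentially the paper's proof: write $I_n=a_n^{-3}K_4(\kappa_n)$ and $J_n=a_nE_4(\kappa_n)$ with $\kappa_n'=b_n/a_n$, apply Proposition \ref{prop:KE} (iv) at $k=\kappa_n$, identify the transformed modulus with $\kappa_{n+1}$ and the factor $\sqrt{1+3(\kappa_n')^2}$ with $2a_{n+1}/a_n$. The only corrections needed are that the homogeneity for the first-kind integral is $I(a,b)=a^{-3}K_4'(b/a)$ (you wrote $a_n^{-1}$), and that your proposed extra substitution of Proposition \ref{prop:KE} (ii) for the $K_4$ term is unnecessary, since that term is already at level $n$ and directly gives the $a_n^2b_n^2I_n$ contribution.
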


\begin{proof}
Set $\kappa_n:=\sqrt[4]{1-(b_n/a_n)^4}$.
We see at once that
\begin{equation}
\label{eq:in}
I_n=\frac{1}{a_n^3}K_4(\kappa_n),\quad
J_n=a_nE_4(\kappa_n),\quad n=0,1,2,\ldots.
\end{equation}

Now, letting $k=\kappa_n$ in Proposition \ref{prop:KE} (iv), we have
$$E_4(\kappa_n)=\sqrt{1+3(\kappa_n')^2}E_4
\left(\sqrt{\frac{1-(\kappa_n')^2}{1+3(\kappa_n')^2}}\right)
-(\kappa_n')^2K_4(\kappa_n).$$
It is easily seen that $\kappa_n'=b_n/a_n$ and $\kappa_{n+1}=\sqrt{1-(\kappa_n')^2}/\sqrt{1+3(\kappa_n')^2}$.
Thus
$$E_4(\kappa_n)=\frac{\sqrt{a_n^2+3b_n^2}}{a_n}E_4(\kappa_{n+1})
-\frac{b_n^2}{a_n^2}K_4(\kappa_n).$$
Multiplying this by $a_n$ and using $\sqrt{a_n^2+3b_n^2}=2a_{n+1}$ we obtain
$$a_nE_4(\kappa_n)=2a_{n+1}E_4(\kappa_{n+1})-\frac{b_n^2}{a_n}K_4(\kappa_n).$$
From \eqref{eq:in} we accomplished the proof.
\end{proof}

\begin{prop}
\label{prop:EK}
Let $a \geq b>0$, then
$$J(a,b)=\left(a^4-a^2 \sum_{n=1}^\infty 2^n c_n^2 \right) I(a,b),$$
where $c_n:=\sqrt[4]{a_n^4-b_n^4}$.
\end{prop}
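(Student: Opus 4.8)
The plan is to derive the claimed identity from the recursion in Lemma \ref{lem:IJ} by telescoping. First I would observe that Lemma \ref{lem:I} in the form of equation \eqref{eq:constant} says precisely that $a_n^2 I_n$ is independent of $n$; write $A:=a^2 I(a,b)=a_n^2 I_n$ for all $n$. Then I would rewrite the recursion $2J_{n+1}-J_n=a_n^2 b_n^2 I_n$ by dividing through by a suitable power of $2$ so that the left side becomes a difference of consecutive terms of a single sequence: setting $u_n:=J_n/2^n$ gives $u_{n+1}-u_n = -\tfrac12 u_n + \tfrac{1}{2^{n+1}} a_n^2 b_n^2 I_n$, which is not yet telescoping. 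A cleaner route is to note $2J_{n+1}-J_n = a_n^2 b_n^2 I_n$ can be written using $b_n^2 = a_n^2 - c_n^2$ (since $c_n^4 = a_n^4-b_n^4 = (a_n^2-b_n^2)(a_n^2+b_n^2)$ — wait, that identity needs care), so I should instead use $a_n^2 b_n^2 = a_n^4 - (a_n^4 - a_n^2 b_n^2)$; better yet, $c_n^2 = \sqrt{a_n^4-b_n^4}$ and $a_n^4 - b_n^4 = c_n^4$, hence $a_n^2 b_n^2 I_n = a_n^2(a_n^2 - \text{something}) I_n$. The key algebraic fact to exploit is $c_n^4 = a_n^4 - b_n^4$, so $b_n^2 = \sqrt{a_n^4 - c_n^4}$; this does not simplify $a_n^2 b_n^2$ linearly in $c_n^2$, so the honest approach is different.

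The correct telescoping comes from writing the recursion as $2J_{n+1} - J_n = a_n^2 b_n^2 I_n$ and observing $a_n^2 b_n^2 I_n = (a_n^2 I_n)(b_n^2) = A \cdot (b_n^2/a_n^2) \cdot a_n^2$; hmm. Let me instead divide the recursion by $2^{n}$ and track $J_n/2^{n-1}$ or similar. Concretely: from $2J_{n+1}=J_n + a_n^2 b_n^2 I_n$ I get $J_{n+1}/2^{n} = J_n/2^{n+1} + a_n^2 b_n^2 I_n/2^{n+1}$, still not telescoping because of the factor $\tfrac12$ on $J_n$. The standard trick for a recursion $2J_{n+1} = J_n + r_n$ is to iterate directly: $J_0 = 2J_1 - r_0 = 2(2J_2 - r_1) - r_0 = 4J_2 - 2r_1 - r_0 = \cdots = 2^N J_N - \sum_{n=0}^{N-1} 2^n r_n$, where $r_n := a_n^2 b_n^2 I_n$. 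So $J_0 = 2^N J_N - \sum_{n=0}^{N-1} 2^n a_n^2 b_n^2 I_n$. Now I would use $a_n^2 I_n = A$ and $b_n^2 = a_n^2 - c_n^2$... but again $c_n^2 = \sqrt{a_n^4-b_n^4}$ so $a_n^2 - b_n^2 \ne c_n^2$ in general; rather $(a_n^2-b_n^2)(a_n^2+b_n^2) = a_n^4 - b_n^4 = c_n^4$. The resolution: from the recursion \eqref{eq:sequence}, $4a_{n+1}^2 = a_n^2 + 3b_n^2$, i.e. $a_n^2 - a_{n+1}^2 = \tfrac34(a_n^2 - b_n^2)$... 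Actually the cleanest substitution is $a_n^2 b_n^2 = a_n^4 - a_n^2(a_n^2 - b_n^2)$ and then use that $b_{n+1}^4 = \tfrac12(a_n^2+b_n^2)b_n^2$ together with $a_{n+1}^4 = \tfrac{1}{16}(a_n^2+3b_n^2)^2$ to show $a_{n+1}^4 - b_{n+1}^4 = \tfrac{1}{16}(a_n^2 - b_n^2)^2$, hence $c_{n+1}^2 = \tfrac14(a_n^2-b_n^2)$, i.e. $a_n^2 - b_n^2 = 4c_{n+1}^2$. This is the crucial identity and it mirrors \eqref{eq:cubic}.

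With $a_n^2 - b_n^2 = 4c_{n+1}^2$ in hand, I get $a_n^2 b_n^2 = a_n^4 - 4a_n^2 c_{n+1}^2$, so $2^n a_n^2 b_n^2 I_n = 2^n a_n^4 I_n - 2^{n+2} a_n^2 c_{n+1}^2 I_n = 2^n a_n^2 A - 2^{n+2} A c_{n+1}^2$ (using $a_n^2 I_n = A$ and $a_n^4 I_n = a_n^2 A$). Summing from $n=0$ to $N-1$: the first piece $\sum 2^n a_n^2 A$ does not obviously telescope, so I suspect the intended manipulation pairs $2^n a_n^4 I_n$ differently — perhaps use $a_n^4 I_n - \text{?}$. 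Let me reconsider: maybe one should use $b_n^2 = a_{n+1}^2 \cdot(\text{const}) - \cdots$; from $4a_{n+1}^2 = a_n^2+3b_n^2$ we get $3b_n^2 = 4a_{n+1}^2 - a_n^2$, so $a_n^2 b_n^2 = \tfrac13 a_n^2(4a_{n+1}^2 - a_n^2) = \tfrac43 a_n^2 a_{n+1}^2 - \tfrac13 a_n^4$. Then $2^n a_n^2 b_n^2 I_n = \tfrac43 \cdot 2^n a_{n+1}^2 A - \tfrac13 \cdot 2^n a_n^2 A$; summing, the two geometric-type sums with $a_n^2 A$ and $a_{n+1}^2 A$ partially telescope against the factor of $2$. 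I expect the main obstacle to be exactly this bookkeeping: correctly combining the $J_0 = 2^N J_N - \sum 2^n r_n$ iteration with the substitution for $a_n^2 b_n^2$ so that everything collapses to $a^4 I(a,b) - a^2 I(a,b)\sum_{n\ge1} 2^n c_n^2$, and verifying that $2^N J_N \to a^4 \cdot \text{(something)} - \text{tail}$ as $N\to\infty$ — i.e. checking the boundary term vanishes in the right sense, which follows from $c_{n}\to 0$ geometrically (by \eqref{eq:cubic}-type estimates) ensuring absolute convergence of $\sum 2^n c_n^2$, and from $J_N, I_N \to \pi_4/(2 M_4)$-type limits via Proposition \ref{prop:KM}. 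Once the telescoping is set up correctly and the limit $N\to\infty$ is justified, dividing by $I(a,b)$ (nonzero) yields the stated formula.
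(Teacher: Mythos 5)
You have correctly collected the ingredients --- the constancy of $a_n^2I_n$ from Lemma \ref{lem:I}, the recursion of Lemma \ref{lem:IJ}, and the key identity $a_{n+1}^4-b_{n+1}^4=\tfrac{1}{16}(a_n^2-b_n^2)^2$, i.e.\ $c_{n+1}^2=\tfrac14(a_n^2-b_n^2)$ --- but you never actually produce a telescoping identity, and you explicitly defer ``setting up the telescoping correctly'' and ``justifying the limit,'' which is exactly where the content of the proposition lies. The missing idea is to telescope not $2^nJ_n$ but $2^n\bigl(J_n-a^2a_n^2I\bigr)$: combining Lemma \ref{lem:IJ} with $a_n^2I_n=a^2I$ and $2a_{n+1}^2=\tfrac12(a_n^2+3b_n^2)$ gives
\[
2\bigl(J_{n+1}-a^2a_{n+1}^2I\bigr)-\bigl(J_n-a^2a_n^2I\bigr)
=a^2\bigl(b_n^2-2a_{n+1}^2+a_n^2\bigr)I
=\tfrac{a^2}{2}(a_n^2-b_n^2)I
=2a^2c_{n+1}^2I,
\]
and multiplying by $2^n$ and summing collapses to
$2^m\bigl(J_m-a^2a_m^2I\bigr)-(J-a^4I)=a^2\bigl(\sum_{n=1}^m2^nc_n^2\bigr)I$.

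Your own iteration $J_0=2^NJ_N-\sum_{n=0}^{N-1}2^nr_n$ is formally correct, but its two pieces are separately divergent: $J_N\to J(M_4,M_4)=M_4\pi_4/2>0$, so $2^NJ_N\to\infty$, and $\sum 2^na_n^2$ diverges likewise, so your claim that the boundary term is handled by ``$J_N,I_N\to\pi_4/(2M_4)$-type limits'' cannot work as stated; the divergences must first be cancelled by the regrouping above. Moreover, even for the correct combination one still needs a quantitative bound on the boundary term: writing
$2^m(J_m-a_m^4I_m)=2^m\int_0^{\pi_4/2}\bigl(a_m^4\cos_4^4\theta+b_m^4\sin_4^4\theta-a_m^4\bigr)\bigl(a_m^4\cos_4^4\theta+b_m^4\sin_4^4\theta\bigr)^{-3/4}d\theta$
shows it is $O(2^mc_m^4)=O(8^{-m})$ by \eqref{eq:cubic}; mere convergence of $J_m$ and $a_m^4I_m$ to a common limit would not suffice to overcome the factor $2^m$. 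With the regrouped telescoping and this estimate, letting $m\to\infty$ yields the proposition; without them your argument has a genuine gap.
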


\begin{proof}
We denote $I(a,b)$ and $J(a,b)$ briefly by $I$ and $J$ respectively.
Lemma \ref{lem:I} gives
$a_n^2I_n=a^2I$ for any $n$. By Lemma \ref{lem:IJ} 
and $c_{n+1}=\sqrt{a_n^2-b_n^2}/2$, we obtain
\begin{align*}
2(J_{n+1}-a^2a_{n+1}^2I)-(J_n-a^2a_n^2I)
&=(a^2b_n^2-3a^2a_{n+1}^2+a^2a_n^2)I\\
&=\frac{a^2}{2}(a_n^2-b_n^2)I\\
&=2a^2c_{n+1}^2I.
\end{align*}
Multiplying this by $2^n$ and summing both sizes 
from $n=0$ to $n=m-1$, we obtain
\begin{align}
\label{eq:J-aI}
2^{m}(J_{m}-a^2 a_{m}^2I)-(J-a^4I)
&=a^2\left(\sum_{n=1}^{m} 2^{n} c_{n}^2\right) I.
\end{align}
On the other hand, since $a^2I=a_{m}^2I_{m}$, we have
\begin{align*}
2^{m}(J_{m}-a^2a_{m}^2I)
&=2^{m}\int_0^{\frac{\pi_4}{2}}
\frac{a_{m}^4\cos_4^4{\theta}+b_{m}^4\sin_4^4{\theta}-a_{m}^4}
{(a_{m}^4\cos_4^4{\theta}+b_{m}^4\sin_4^4{\theta})^{\frac34}}\, d\theta\\
&=2^{m}c_{m}^4\int_0^{\frac{\pi_4}{2}}
\frac{-\sin_4^4{\theta}}
{(a_{m}^4\cos_4^4{\theta}+b_{m}^4\sin_4^4{\theta})^{\frac34}}\, d\theta.
\end{align*}
By \eqref{eq:cubic} we get
$$0 \leq 2^{m}c_{m}^4 \leq \frac{1}{8^{m}}(a^2-b^2)^2,$$
which means 
$\lim_{m \to \infty}2^{m}(J_{m}-a^2a_{m}^2I)=0$. 
Therefore,  as $m \to \infty$ in \eqref{eq:J-aI}
the proposition follows. 
\end{proof}

Now we are in a position to show Theorem \ref{thm:main}.

\begin{proof}[Proof of Theorem \ref{thm:main}]
Let $k=1/\sqrt[4]{2}$ in Lemma \ref{lem:p-legendre},
then 
\begin{equation}
\label{eq:K}
2K_4\left(\frac{1}{\sqrt[4]{2}}\right)E_4\left(\frac{1}{\sqrt[4]{2}}\right)
-K_4\left(\frac{1}{\sqrt[4]{2}}\right)^2=\frac{\pi_4}{2}.
\end{equation}
Letting $a=1$ and $b=1/\sqrt[4]{2}$ in Proposition \ref{prop:EK} we get
$$E_4\left(\frac{1}{\sqrt[4]{2}}\right)
=\left(1-\sum_{n=1}^\infty 2^n c_n^2 \right) 
K_4\left(\frac{1}{\sqrt[4]{2}}\right),$$
where $c_n=\sqrt[4]{a_n^4-b_n^4}$.
Substituting this 
to \eqref{eq:K}, we have
$$\left(2\left(1-\sum_{n=1}^\infty 2^{n}c_{n}^2 \right)-1\right)
K_4\left(\frac{1}{\sqrt[4]{2}}\right)^2
=\frac{\pi_4}{2}.$$
Finally, applying Proposition \ref{prop:KM} with $a=1$ and $b=1/\sqrt[4]{2}$
to this, we obtain
$$\left(1-\sum_{n=1}^\infty 2^{n+1}c_{n}^2 \right)
\frac{\pi_4^2}{4M_4\left(1,\dfrac{1}{\sqrt[4]{2}}\right)^2}
=\frac{\pi_4}{2}.$$
This leads the result.
\end{proof}








\end{document}